\newtheorem{theorem}{Theorem}
\theoremstyle{definition}
\newtheorem*{remark}{Remark}
\newcommand{\N}{\mathbb{N}}
\newcommand{\C}{{\mathbb{C}}}
\newcommand{\dd}{{\rm d}}
\begin{document}

\title{A sharp form of the discrete Hardy
inequality and the Keller--Pinchover--Pogorzelski inequality}
%\markright{The optimal discrete Hardy inequality}

\author{David Krej{\v c}i{\v r}{\' i}k}
\author{Franti{\v s}ek {\v S}tampach}
\address{Department of Mathematics, Faculty of Nuclear Sciences and Physical Engineering, Czech Technical University 
in Prague, Trojanova 13, 12000 Prague~2, Czech Republic}
\email{david.krejcirik@fjfi.cvut.cz \& stampfra@fjfi.cvut.cz}

\begin{abstract}
We give a short proof of a recently established Hardy-type inequality
due to Keller, Pinchover, and Pogorzelski together with its optimality.
Moreover, we identify the remainder term which makes it into an identity.
\end{abstract}
\maketitle

%\bigskip
\noindent 
In~\cite{keller-etal_amm18}, Keller, Pinchover, and Pogorzelski made an interesting observation that the celebrated discrete Hardy inequality
\[
 \sum_{n=1}^{\infty}\frac{|u_{n}|^{2}}{4n^{2}}\leq\sum_{n=1}^{\infty}|u_{n}-u_{n-1}|^{2},
\]
which holds true for all complex sequences $\{u_{n}\}_{n=0}^{\infty}$ with $u_{0}=0$, can be improved to the inequality
\begin{equation}
 \sum_{n=1}^{\infty}w_{n}|u_{n}|^{2}\leq\sum_{n=1}^{\infty}|u_{n}-u_{n-1}|^{2}
\label{eq:hardy_improved}
\end{equation}
with the weight sequence
\begin{equation}
 w_{n}:=2-\sqrt{\frac{n+1}{n}}-\sqrt{\frac{n-1}{n}}>\frac{1}{4n^{2}}.
\label{eq:optimal_weight}
\end{equation}
Moreover, in~\cite{keller-etal_cmp18}, the same authors showed that the improved discrete Hardy inequality is actually optimal. This means that if~\eqref{eq:hardy_improved} holds with a weight sequence $\tilde{w}$ such that $\tilde{w}_{n}\geq w_{n}$ for all $n\in\N$, then necessarily $\tilde{w}_{n}=w_{n}$ for all $n\in\N$. Note that if inequality~\eqref{eq:hardy_improved} holds true for all finitely supported sequences $u\in C_{0}(\N_{0})$, then it is true for all complex sequences~$u$. 

The aim of this note is to provide a short and elementary proof of inequality~\eqref{eq:hardy_improved} as well as its optimality. In fact, we show more, namely, we determine a remainder term in~\eqref{eq:hardy_improved} yielding an identity 
from which the inequality~\eqref{eq:hardy_improved} readily follows.

\begin{theorem}
 For all $u\in C_{0}(\N_{0})$ with $u_{0}=0$, we have the identity
 \begin{equation}
 \sum_{n=1}^{\infty} w_{n} |u_n|^2  + \sum_{n=2}^\infty \left|\sqrt[4]{\frac{n-1}{n}} \, u_n - \sqrt[4]{\frac{n}{n-1}} \, u_{n-1} \right|^2 =
 \sum_{n=1}^{\infty} |u_n - u_{n-1}|^2.
 \label{eq:ident}
 \end{equation}
  In particular, the inequality~\eqref{eq:hardy_improved} holds and, in addition, is optimal.
\end{theorem}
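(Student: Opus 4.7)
I would prove the identity \eqref{eq:ident} by direct algebraic expansion, from which the inequality and its optimality both follow.

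For the identity, I would expand $|u_n - u_{n-1}|^2$ on the right-hand side and the two squared moduli in the remainder on the left, and compare coefficients of $|u_k|^2$ and of the mixed terms $\operatorname{Re}(\bar u_n u_{n-1})$. Since $\sqrt[4]{(n-1)/n}\cdot\sqrt[4]{n/(n-1)} = 1$, the mixed terms on the two sides agree at each $n \geq 2$, and the only remaining mismatch at $n=1$ vanishes by $u_0 = 0$. After reindexing one of the sums, the coefficient of $|u_k|^2$ on the left becomes $w_k + \sqrt{(k-1)/k} + \sqrt{(k+1)/k}$ (with $\sqrt{0/1}=0$ when $k=1$ and $w_1 = 2-\sqrt{2}$), which equals $2$ by \eqref{eq:optimal_weight}; the coefficient on the right is also $2$. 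This establishes \eqref{eq:ident}, whence \eqref{eq:hardy_improved} is immediate by dropping the nonnegative remainder.

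For optimality, the remainder in \eqref{eq:ident} vanishes precisely for sequences proportional to the (non-$\ell^2$) ``ground state'' $\phi_n := \sqrt{n}$, so I would perform the substitution $u_n = \sqrt{n}\,\psi_n$, under which the remainder collapses to $\sum_{n\geq 2}\sqrt{n(n-1)}\,|\psi_n - \psi_{n-1}|^2$. Supposing that $\tilde w_{n_0} > w_{n_0}$ for some $n_0$ while $\tilde w_n \geq w_n$ for all $n$ and that \eqref{eq:hardy_improved} held with $\tilde w$ in place of $w$, the identity would force, for every finitely supported $\psi$,
\[
(\tilde w_{n_0} - w_{n_0})\,n_0\,|\psi_{n_0}|^2 \;\leq\; \sum_{n\geq 2}\sqrt{n(n-1)}\,|\psi_n-\psi_{n-1}|^2.
\]
I would then test with the logarithmic cutoff $\psi_n^{(N)} := \log(N/n)/\log N$ for $1 \leq n \leq N$ (and $0$ otherwise); the elementary bound $|\log(n/(n-1))| \leq 1/(n-1)$ makes the right-hand side of size $O(1/\log N)$, while $\psi_{n_0}^{(N)} \to 1$, yielding a contradiction for $N$ large.

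The identity itself is a pleasant, if slightly delicate, bookkeeping of coefficients. The genuine difficulty is the optimality step, where one must spot that the natural extremizer $\sqrt{n}$ fails to lie in $\ell^2$---mirroring the non-integrability of $1/n$ behind the classical continuous Hardy inequality---and that only a logarithmic cutoff in the transformed variable $\psi$ produces a test family along which the transformed remainder vanishes in the limit.
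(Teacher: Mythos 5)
Your proposal is correct and follows essentially the same route as the paper: the identity is established by the same algebraic bookkeeping (the paper organizes it as a pointwise identity plus a summation by parts using $w_n=(h_n-h_{n+1})/\sqrt{n}$ with $h_n=\sqrt{n}-\sqrt{n-1}$, while you compare coefficients of $|u_k|^2$ globally, but the computation is identical), and the optimality argument is the paper's logarithmic cutoff of the ground state $\sqrt{n}$, merely with the cutoff supported on $\{1,\dots,N\}$ rather than on $\{N,\dots,N^2\}$. Both cutoffs yield the same $O(1/\log N)$ bound on the remainder, so the arguments are equivalent.
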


\begin{proof}
First, we establish~\eqref{eq:ident}. An important observation for the proof is that the weight~\eqref{eq:optimal_weight} can be expressed as
\[
  w_n = \frac{h_n - h_{n+1}}{\sqrt{n}}
\]
for $h_n := \sqrt{n} - \sqrt{n-1}$. 
Since
\begin{align*}
  &\left|
  \sqrt{1-\frac{h_n}{\sqrt{n}}} \, u_n 
  - \sqrt{1+\frac{h_n}{\sqrt{n-1}}} \, u_{n-1}
  \right|^2 
  \\
  &\hskip68pt= \left(1-\frac{h_n}{\sqrt{n}}\right) \, |u_n|^2
  + \left(1+\frac{h_n}{\sqrt{n-1}}\right) \, |u_{n-1}|^2
  -2 \, \Re (\bar{u}_n u_{n-1})
  \\
  &\hskip68pt=
  |u_n - u_{n-1}|^2 
  - h_n \left(
  \frac{|u_n|^2}{\sqrt{n}} - \frac{|u_{n-1}|^2}{\sqrt{n-1}}
  \right)
\end{align*}
we have the identity
\begin{equation}
\left|
  \sqrt{1-\frac{h_n}{\sqrt{n}}} \, u_n 
  - \sqrt{1+\frac{h_n}{\sqrt{n-1}}} \, u_{n-1}
  \right|^2 \!\!+ 
  h_n \!\left(
  \frac{|u_n|^2}{\sqrt{n}} - \frac{|u_{n-1}|^2}{\sqrt{n-1}}
  \right)\!=
  |u_n - u_{n-1}|^2
\label{eq:key_id}
\end{equation}
for all $n\in\N$ and $\{u_{n}\}_{n=0}^{\infty}\subset\C$ with $u_{0}=0$, where the terms 
\[
 \sqrt{1+\frac{h_n}{\sqrt{n-1}}}\,u_{n-1} \quad \mbox{ and } \quad \frac{|u_{n-1}|^2}{\sqrt{n-1}}
\]
are to be understood as zeros if $n=1$. Next, summing by parts, we obtain
\[
 \sum_{n=1}^{\infty}w_{n}|u_{n}|^{2}=\sum_{n=1}^{\infty}\frac{h_{n}-h_{n+1}}{\sqrt{n}}|u_{n}|^{2}=\sum_{n=1}^{\infty}h_n\left(
  \frac{|u_n|^2}{\sqrt{n}} - \frac{|u_{n-1}|^2}{\sqrt{n-1}}
  \right)
\] 
for any $u\in\C_{0}(\N_{0})$ with $u_{0}=0$. Thus, applying~\eqref{eq:key_id}, we establish~\eqref{eq:ident}.

Second, we prove the optimality. Suppose $\tilde{w}$ is a sequence such that $\tilde{w}_{n}\geq w_{n}$ for all $n\in\N$ and~\eqref{eq:hardy_improved} holds for all $u\in C_{0}(\N_{0})$, $u_{0}=0$, with $w$ replaced by $\tilde{w}$. Then, by~\eqref{eq:ident},
\begin{equation}
 0\leq\sum_{n=1}^{\infty}(\tilde{w}_{n}-w_{n})|u_{n}|^{2}\leq \sum_{n=2}^\infty \left|\sqrt[4]{\frac{n-1}{n}} \, u_n - \sqrt[4]{\frac{n}{n-1}} \, u_{n-1} \right|^2
\label{eq:towards_optimality}
\end{equation}
for all $u\in C_{0}(\N_{0})$. Note the right-hand side of~\eqref{eq:towards_optimality} vanishes if $u_{n}=\sqrt{n}$ which, however, is not a finitely supported sequence. Therefore we regularize the sequence $u_n = \sqrt{n}$ by introducing $u_n^N := \xi_n^N \sqrt{n}$, where
\[
  \xi_n^N :=
  \begin{cases}
    1 & \mbox{if} \quad n < N \,,
    \\[2pt]
    \frac{\displaystyle 2 \log{N}-\log n}{\displaystyle\log N}
    & \mbox{if} \quad N\leq n \leq N^2 \,,
    \\[2pt]
    0 & \mbox{if} \quad n > N^2 \,,
  \end{cases}
\]
with $N\geq 2$. 
Notice that $\xi^N\to1$ pointwise as $N \to \infty$.
At the same time, we have
\begin{align*}
&\sum_{n=2}^\infty \left|\sqrt[4]{\frac{n-1}{n}} \, u_{n}^{N} - \sqrt[4]{\frac{n}{n-1}} \, u_{n-1}^{N} \right|^2
=\sum_{n=2}^\infty\sqrt{n(n-1)}\left|\xi_{n}^{N}-\xi_{n-1}^{N}\right|^{2}\\
&=\frac{1}{\log^{2}N}\sum_{n=N+1}^{N^{2}}\sqrt{n(n-1)}\log^{2}\left(\frac{n}{n-1}\right)
\leq\frac{1}{\log^{2}N}\sum_{n=N+1}^{N^{2}}\frac{\sqrt{n(n-1)}}{(n-1)^{2}}\\
&\leq\frac{2}{\log^{2}N}\sum_{n=N+1}^{N^{2}}\frac{1}{n-1}\leq\frac{2}{\log^{2}N}\int_{N}^{N^{2}}\frac{\dd n}{n-1}=\frac{2\log(N+1)}{\log^{2}N}\leq\frac{4}{\log N}.
\end{align*}
Since the last expression tends to $0$ for $N\to\infty$, we deduce from~\eqref{eq:towards_optimality} that 
\[
\sum_{n=1}^{\infty}n(\tilde{w}_{n}-w_{n})=0.
\]
Bearing in mind that $\tilde{w}_{n}\geq w_{n}$ for all $n\in\N$, we conclude that $w_{n}=\tilde{w}_{n}$ for all $n\in\N$.
\end{proof}

\begin{remark}
Inequality~\eqref{eq:hardy_improved} is a particular case of the improved discrete $p$-Hardy inequality established by Fischer, Keller, and Pogorzelski in a currently unpublished preprint~\cite{fischer-etal_preprint}. Their weight sequence again improves upon the one of the classical discrete $p$-Hardy inequality but its optimality has not been demonstrated yet. A short proof of the classical discrete $p$-Hardy inequality with an optimal constant can be found in the recent paper~\cite{lefevre_am20}.
\end{remark}

\section*{Acknowledgment.}
%\begin{acknowledgment}{Acknowledgment.}
The research of the authors was supported by the GA{\v C}R grant No.~20-17749X.
%\end{acknowledgment}


\begin{thebibliography}{1}
\bibitem{fischer-etal_preprint} Fischer, F., Keller, M., Pogorzelski, F. (2019). An improved discrete $p$-Hardy inequality. \textit{Preprint}. arXiv:1910.03004.

\bibitem{keller-etal_amm18} Keller, M., Pinchover, Y., Pogorzelski, F. (2018). An improved discrete Hardy inequality. \textit{Amer. Math. Monthly.} 125(4): 347--350.

\bibitem{keller-etal_cmp18} Keller, M., Pinchover, Y., Pogorzelski, F. (2018). Optimal Hardy inequalities for Schr\"odinger operators on graphs. \textit{Comm. Math. Phys.} 358(2): 767--790.

\bibitem{lefevre_am20} Lef{\` e}vre, P. (2020) A short direct proof of the discrete Hardy inequality. \textit{Arch. Math.} 114(2): 195--198.
\end{thebibliography}
\end{document}